\let\wtilde\widetilde
\let\bar\overline
\def\cF{\mathcal F}
\def\OO{\mathcal O}
\def\FF{\mathbb F}
\def\PP{\mathbb P}
\def\ZZ{\mathbb Z}
\def\NN{\mathbb N}
\def\QQ{\mathbb Q}
\def\CC{\mathbb C}
\def\n{\mathfrak n}
\def\m{\mathfrak m}
\let\inc\hookrightarrow
\theoremstyle{theorem}
\newtheorem{thm}{Theorem}
\numberwithin{thm}{section}
\newtheorem{lem}[thm]{Lemma}
\newtheorem{cor}[thm]{Corollary}
\newtheorem{prop}[thm]{Proposition}
\theoremstyle{definition}
\newtheorem{dfn}[thm]{Definition}
\newtheorem{exa}[thm]{Example}
\newtheorem{nota}[thm]{Notation}
\newtheorem{rem}[thm]{Remark}
\title{Computing Direct Sum Decompositions}
\author{Devlin Mallory}
\address{Basque Center for Applied Mathematics, Bilbao, Basque Country, Spain}
\email{\href{mailto:dmallory@bcamath.org}{dmallory@bcamath.org}}
\author{Mahrud Sayrafi}
\address{Max Planck Institute for Mathematics in the Sciences, Leipzig, Germany}
\email{\href{mailto:mahrud@mis.mpg.de}{mahrud@mis.mpg.de}}
\subjclass[2020]{16D70; 14F06, 13A35}
\begin{document}

\maketitle

\begin{abstract}
  We describe and prove correctness of two practical algorithms for finding indecomposable summands of finitely generated modules over a finitely generated $k$-algebra $R$.\linebreak The first algorithm applies in the (multi)graded case, which enables the computation of indecomposable summands of coherent sheaves on subvarieties of toric varieties (in particular, for varieties embedded in projective space); the second algorithm applies when $R$ is local and $k$ is a finite field, opening the door to computing decompositions in singularity theory.\linebreak We also present multiple examples, including some which present previously unknown phenomena regarding the behavior of summands of Frobenius pushforwards (including in the non-graded case) and syzygies over Artinian rings.
\end{abstract}

\section{Introduction}

The problems of finding isomorphism classes of indecomposable modules with a given property, or determining the indecomposable summands of a module, are ubiquitous in commutative algebra and representation theory and even in areas such as topological data analysis.\linebreak Within commutative algebra, for instance, the classification of rings $R$ for which there are only finitely many isomorphism classes of indecomposable maximal Cohen--Macaulay $R$-modules (the \emph{finite CM-type} property), or rings for which iterated Frobenius pushforwards in positive characteristic have finitely many isomorphism classes of indecomposable summands (the \emph{finite F-representation type} property) are two well-established research problems (see, for example, \cite{Yoshino90,LW12} for the former and \cite{SVdB97,Hara15,TT08}, among many others, for the latter). For both these problems, and many others, making and testing conjectures depends on computing summands of modules and verifying their indecomposability.

Currently there are no algorithms available for finding direct sum decompositions of an arbitrary module over a commutative ring. The need for and absence of such algorithms is noted, for example, in \cite[\S15.10.9]{Eisenbud95} and \cite[\S8.5]{BL23}. In contrast, variants of the ``Meat-Axe'' algorithm for determining irreducibility of finite-dimensional modules over a group algebra have wide-ranging applications in computational group theory \cite{Parker84,HR94,Holt98,IL00} and are available through symbolic algebra software such as Magma and GAP \cite{MAGMA,GAP}.
\looseness-1

Meanwhile, effective computation of indecomposable components of multiparameter persistence modules is used in topological data analysis, with applications to computational chemistry, materials science, neuroscience, and many other areas \cite{BL23}.

The purpose of this paper is to describe and prove correctness of two practical algorithms for computing indecomposable summands of finitely generated modules over a $k$-algebra $R$: \Cref{alg:graded} in the case that $M$ is a homogeneous module over a (multi)graded ring $R$ with $k$ a field of arbitrary characteristic, and \Cref{alg:local} when $R$ is local and $k$ a field of positive characteristic contained in $\bar \FF_p$. Over a large enough field, our algorithms are often capable of completely decomposing a module in a single non-recursive iteration.

In the graded case, \Cref{alg:graded} enables the computation of indecomposable summands of coherent sheaves on subvarieties of the projective space $\PP^n$ (see \Cref{sec:coherent}) as well as other complete toric varieties, while in the local case \Cref{alg:local} enables the study of germs of inhomogeneous singularities in positive characteristic (see \Cref{exa:local-singularities}).

In \Cref{sec:examples}, we present multiple examples, including  previously unknown phenomena regarding the behavior of summands of Frobenius pushforwards and syzygies over Artinian rings. In particular, we highlight the results of \cite{CDE24}, which shows a recurrence formula for indecomposable summands of high syzygies of the residue field of Golod rings, made possible through experiments and observations using our algorithm.

An implementation in Macaulay2 \cite{M2} is available via the GitHub repository \\
\centerline{
  \href{https://github.com/mahrud/DirectSummands}
       {\texttt{https://github.com/mahrud/DirectSummands}}.}
While this paper, and our implementation in Macaulay2, only concerns the case when $R$ is a commutative ring, the basic techniques may also extend to exterior algebras or Weyl algebras.

Finally, we note that there is a ``missing'' case of our algorithm: when $R$ is only local, not graded, and $k$ has characteristic 0. If a module over such a ring is decomposable, its reductions modulo $p$ will be as well. \Cref{rem:char} gives a heuristic for verifying decomposability in characteristic 0.

\subsection*{Acknowledgements}

The authors would like to thank David Eisenbud for several very useful conversations and ideas, and Ezra Miller for pointing out the relevance of this algorithm in topological data analysis. We also thank the referees for their careful reading and suggestions, which significantly improved the exposition of the paper.

The first author was supported in by the National Science Foundation under the RTG Grant No.~1840190, and by EUR2023-143443 funded by MCIN/AEI/10.13039/501100011033.
The second author was supported in part by the Doctoral Dissertation Fellowship at the University of Minnesota and the National Science Foundation under the Grant No.~2001101. Some of this work took place at the American Institute of Mathematics during the workshop ``Macaulay2: expanded functionality and improved efficiency.''

\section{Notation}

Throughout we will work over a field $k$ and write $\overline k$ for a choice of algebraic closure of $k$. \linebreak In particular, $\overline{\FF_p}$ is the algebraic closure of the finite field $\ZZ/p\ZZ$.

Let $R$ be a local or graded $k$-algebra with maximal ideal $\m$ and residue field $k = R/\m$. Given an $R$-module $M$, we write $\mu(M) \coloneqq \dim_k(M/\m M)$ for the minimal number of generators of $M$ as an $R$-module. If $R$ is graded and $M$ is homogeneous, we write $[M]_d$ for the $k$-span of its degree-$d$ elements.

We will use the Greek letters  $\phi$ or $\psi$ for elements of the endomorphism algebra $\End_R(M)$ of $M$ and capital Latin letters $A$ or $B$ for the induced $k$-linear maps on~$M/\m M$.

For an algebraic variety $X$ and a point $x\in X$, we write $k(x)=\OO_{X,x}/\m_x$ for the residue field of $X$ at $x$ and $k(X)$ for the function field of $X$.
When $X$ is a projective variety and $\OO_X(1)$ is a fixed very ample line bundle on $X$, the twisted global sections functor $\Gamma_*$ is the functor given by $\cF \mapsto \bigoplus_{n\in\ZZ} H^0(X, \cF(n))$, sending coherent sheaves to modules over the graded ring $S = \Gamma_*(\OO_X)$. Conversely, if $M$ is a finitely generated, graded $S$-module, we write $\wtilde M$ for the coherent sheaf associated to $M$.

\section{The main algorithms}

We begin by describing the main algorithms for finding the indecomposable summands of a finitely generated module over a graded ring or a local ring with residue field contained in $\overline{\FF_p}$.

\subsection{The graded case}\label{sec:graded-alg}

Let $R$ be a $\NN$-graded ring with $R_0 = k$ a field and $M = \bigoplus [M]_d$ a finitely generated, graded $R$-module and consider a degree-zero endomorphism $\phi \in [\End_R(M)]_0$. Fix a set of minimal homogeneous generators $m_1,\dots,m_{\mu(M)}$ of $M$ so that $\phi$ may be presented as a $\mu(M)\times\mu(M)$ matrix with entries in $R$. We will refer to $\chi_\phi(\lambda) = \det(\phi-\lambda\id_M)$ as the ``characteristic polynomial of $\phi$'', which is a univariate polynomial with coefficients in $R$ and thus a priori has solutions only in the algebraic closure of the total ring of fractions of $R$.

\begin{prop}\label{prop:eigenvalues}
  Let $A\colon M/\m M\to M/\m M$ be the $k$-linear map induced by $\phi\in[\End_R(M)]_0$. The solutions of the characteristic polynomial $\chi_\phi(\lambda)$ are the same as the eigenvalues of $A$, thus they lie in the algebraic closure $\bar k$.
\end{prop}
\begin{proof}
  Since $M$ is $\NN$-graded, we may order its generators such that $\deg m_i$ is nonincreasing; let $d_1,\dots,d_n$ be the decreasing list of unique degrees of generators of $M$.
The matrix for $\phi$ is then ``block upper triangular'' with $n$ diagonal blocks with entries in $k$, i.e., of the form
  \begin{equation}\label{eq:blocks}
    \phi = \begin{pmatrix}
    B_1    & *      & \dots  & *      \\
    0      & B_2    & \dots  & *      \\
    \vdots & \vdots & \ddots & \vdots \\
    0      & 0      & \dots  & B_n
    \end{pmatrix},
  \end{equation}
  where the blocks $B_i$ are square matrices \emph{with entries in $k$}, and the remaining entries are in $\m$. To see this, note that since $\phi$ has degree zero, $\phi(m_i)$ is the sum of a $k$-linear combination of the $m_j$ of the same degree as $m_i$ and an $\m$-linear combination of the $m_j$ of lower degree. 

  Thus the characteristic polynomial $\det(\phi - \lambda \id_M) = \prod \det(B_i - \lambda\id) = \det(A - \lambda \id_{M/\m M})$ is a univariate polynomial in $\lambda$ with coefficients in $k$, which takes solutions in $\bar k$.
\end{proof}

\begin{dfn}\label{def:eigenvalues}
  The collection of $k$-eigenvalues of $B_i$ are the \emph{eigenvalues of $\phi$}. We will write $\mu_j \coloneqq \mu(\lambda_j)$ for the sum of the geometric multiplicities of $\lambda_j$ in each $B_i$. Note that the algebraic multiplicity of $\lambda_j$ in $\chi_\phi(\lambda)$ is the sum of the algebraic multiplicities of $\lambda_j$ in each $\chi_{B_i}(\lambda)$ and may be larger than $\mu_j$.
\end{dfn}

\begin{rem}\label{rem:grading}
  The hypotheses that $R$ is $\NN$-graded may be weakened. In the proof of \Cref{prop:eigenvalues}, we only need a partial order on the degrees of the generators of $M$ in order to present $\phi$ in a block upper triangular form \eqref{eq:blocks}. If $R$ is $\ZZ^r$-graded, then the \emph{effective cone} $\Eff(R)\subset\QQ^n$ generated by the elements $\deg m$ for each monomial $m \in R$ induces a partial order on the degrees of generators of $M$ if it is strongly convex (contains no positive dimensional subspace of $\QQ^n$) and $R_0 = k$. We will call such rings \emph{positively graded}. 
  This is satisfied, for instance, in the setting of multiparameter persistence modules or when $R$ is the Cox ring of a complete toric variety or Mori dream space graded by its class group.
\end{rem}

We will use nonzero eigenvalues of $\phi$ to get nontrivial splittings of $M$. As the following lemma shows, this approach allows for decomposing multiple summands at once.

\begin{lem}\label{lem:decomposition}
  Suppose $\psi_1,\dots,\psi_r$ are endomorphisms of $M$ such that $M\to\im\psi_i$ is a split surjection for all $i$ and set $\psi = \psi_r\circ\dots\circ\psi_1$.
  If $\ker\psi_i\cap\ker \psi_j = 0$ for all $i\neq j$ then $M$ has a direct sum decomposition \( M = \ker\psi_r \oplus \cdots \oplus \ker\psi_1 \oplus \im\psi. \)
\end{lem}
\begin{proof}
  Since each $\psi_i$ is a split surjection, there are direct sum decompositions $M=\im \psi_i \oplus \ker \psi_i$, and thus there exist idempotents $e_i\in \End M$ with $\im e_i = \im \psi_i$ and $\ker e_i = \ker \psi_i$.

  The $e_i$ must commute: let $f_i = 1- e_i$ be the complementary idempotent (projection onto $\ker \psi_i$). Since $\ker \psi_i \cap \ker \psi_j=0$, $f_if_j=0$. In particular,
  \[ e_ie_j =(1-f_i)(1-f_j) = 1-f_i-f_j= 1-f_j-f_i = e_j e_i. \]

  Since the $e_i$ are commuting idempotents, $e_r\cdots e_1$ is also idempotent. It's clear that $\im(e_r\cdots e_1) = \im (\psi_r\circ \dots\circ \psi_1) = \im \psi$. Moreover, we claim that $\ker(e_r\cdots e_1) = \ker \psi_r \oplus \cdots \oplus \ker \psi_1$. To see this, note that since $f_if_j=0$ we get
  \[ 1-e_r\cdots e_1 = 1-(1-f_r)\cdots (1-f_1) = \sum f_i. \]
  In other words, the kernel of $e_r\cdots e_1$ is exactly $\bigoplus \ker e_i =\bigoplus \ker \psi_i$.

  Thus, the idempotent $e_r\cdots e_1$ corresponds to a direct sum decomposition
  \[ M = \ker(e_r\cdots e_1) \oplus \im(e_r\cdots e_1) = \ker\psi_r \oplus \cdots \oplus \ker\psi_1 \oplus \im\psi, \]
  as desired.
\end{proof}

Note that while $\im\psi$ may be zero, as long as each $\psi_i$ in \Cref{lem:decomposition} is not zero or an isomorphism, the kernel summands will be nontrivial (although they may not necessarily be indecomposable).

The following proposition is a module-theoretic analogue of the primary decomposition theorem in linear algebra:

\begin{prop}\label{prop:split-surj}
  Let $\lambda_1,\dots,\lambda_r$ be distinct eigenvalues of $\phi$ and set $\psi_j = (\phi - \lambda_j\id_M)^{\mu_j}$ and $\psi = \psi_1\circ\cdots\circ\psi_r$. Then $M$ has a direct sum decomposition
  \( M = \ker\psi_1 \oplus \cdots \oplus \ker\psi_r \oplus \im\psi. \)
\end{prop}

\begin{proof}
  If $\psi_i$ is injective on $\im\psi_i$, then $\im\psi_i$ is a summand: in this case, the composition $\im\psi_i\hookrightarrow M\to \im\psi_i$ is injective, but such a degree-zero injective endomorphism of a finitely generated $R$-module is an isomorphism (it must be injective, hence surjective, on each finite-dimensional $[M]_d$). Thus, the inclusion $\im \psi \to M$ is split by the surjection $M\to \im \psi$, and so the short exact sequence $0\to\ker\psi_i\to M\to\im\psi_i\to 0$ is split and $M = \ker\psi_i\oplus\im\psi_i$.

  To see that $\psi_i$ is injective on $\im\psi_i$, we can pass to $\overline k$ and thus assume all eigenvalues of $\phi$ exist in $k$. For simplicity, replace $\phi-\lambda_i\id_M$ by $\phi$ so $\psi_i = \phi^{\mu_i} $ divides $ \chi_\phi(\phi)$.
By the Cayley--Hamilton theorem \cite[Theorem~4.3]{Eisenbud95},
  \begin{align*}
    \chi_\phi(\phi) = \left(\phi^{\mu(M) - \mu_i} - \dots + c_1\phi - c_0\right) \circ \psi_i
    &= \left(\phi^{\mu(M) - \mu_i - 1} - \dots + c_1\right) \circ \phi \circ \psi_i - c_0 \psi_i = 0,
  \end{align*}
  for some $c_0,c_1$ in $k$. Moreover, $c_0\neq 0$, because otherwise $\chi_\psi(\psi)$ is divisible by $\phi^{\mu_i+1}$, contradicting the fact that $\mu_i$ is the algebraic multiplicity of $\lambda_i$ in $\chi_\phi(\phi)$.
Thus,
  \( \psi_i = c_0^{-1} \left(\phi^{\mu(M) - \mu_i - 1} - \dots + c_1\right) \circ \phi \circ \psi_i. \)
  Thus $\phi$ is injective on $\im\psi_i$, and therefore the composition $\psi_i=\phi^{\mu_i}$ is still injective on $\im\psi_i$.
We thus have that $\psi_i$ is a split surjection, as desired.

  To see that $\ker\psi_i\cap\ker\psi_j = 0$ if $i\neq j$, we mimic the standard proof for vector spaces: suppose $(\phi - \lambda_i)^m(v) = 0$ for some $v\in M$ and $m$ is minimal, so that $w = (\phi-\lambda_i)^{m-1}(v) \neq 0$ satisfies $(\phi-\lambda_i)(w) = 0$.
If $(\phi - \lambda_j)^{\mu_j}(v) = 0$ as well, then since $(\phi - \lambda_j)(\phi - \lambda_i) = (\phi - \lambda_i)(\phi - \lambda_j)$ (as $\phi$ commutes with itself and the identity) we get
  \[
  (\phi - \lambda_j)^{\mu_j}(w)
  = (\phi - \lambda_j)^{\mu_j}(\phi - \lambda_i)^{m-1}(v)
  = (\phi - \lambda_i)^{m-1}(\phi - \lambda_j)^{\mu_j}(v) = 0.
  \]
Thus,
  \(
  0 = (\phi - \lambda_j)^{\mu_j}(w) = (\phi - \lambda_i + \lambda_i - \lambda_j)^{\mu_j}(w) = (\lambda_i - \lambda_j)^{\mu_j}(w)
  \)
(using that $(\phi - \lambda_i)(w)=0$).
Since $\lambda_i-\lambda_j \in k$ is nonzero,
$(\lambda_i - \lambda_j)^{\mu_j}(w)= 0 $ forces $w=0$, a contradiction.

  Finally, \Cref{lem:decomposition} implies the desired decomposition.
\vadjust{\goodbreak}
\end{proof}

We are ready to state the algorithm for decomposition of a graded module. Recall from \Cref{rem:grading} that we only need the ring $R$ to be positively graded over an arbitrary field.

\begin{algorithm}[H]
  \caption{(Indecomposable summands of a graded module over a commutative ring)}\label{alg:graded}
  \begin{algorithmic}[1]
    \smallskip
    \Require graded module $M$ over a positively graded commutative $k$-algebra $R$.
    \Ensure  list $\textproc{Summands}(M)$ of indecomposable summands of $M$.
    \State Initialize a list $L$.
    \State Take a general element $\phi$ in $[\End_R(M)]_0$,
           the degree-zero part of $\End_R(M)$. \label{item:End0}
    \State Let $k'$ be the splitting field of the minimal polynomial of $\phi$.
    \ForAll { $k'$-eigenvalues $\lambda_i$ of $\phi$ }
      \State Let $\mu_i$ be the (geometric) multiplicity of $\lambda_i$.
      \State Let $\psi_i = (\phi - \lambda_i \id_M)^{\mu_i}$ (or, for simplicity, raised to $\mu(M)$).
      \State By \Cref{prop:split-surj}, $\psi_i$ is a split surjection, hence producing a splitting of $M$.
      \State Append \Call{Summands}{$\ker\psi_i$} to $L$.
    \EndFor
    \State Let $\psi$ be the composition of all $\psi_i$ above.
    \If {$\psi$ is nonzero}
      \State Append \Call{Summands}{$\im\psi$} to $L$.
    \EndIf
    \State \Return $L$.
  \end{algorithmic}
\end{algorithm}

We will see via \Cref{lem:general} that this results in an indecomposable decomposition of $M$.

\begin{rem}\label{rem:graded}
  Note that \Cref{alg:graded} does not depend on the characteristic of the field. Further, since over a large enough field (i.e., when $|k|$ is larger than the number of summands) the eigenvalues of a general endomorphism $\phi$ are as distinct as  possible (by a variant of \Cref{lem:general}), the summands $\ker\psi_i$ and $\im\psi$ will be indecomposable; thus \Cref{alg:graded} will produce an indecomposable decomposition of $M$ in one non-recursive iteration. When $|k|$ is less than or approximately the same as the number of summands, the algorithm will be recursive.
\looseness-1
\end{rem}

\subsection{The local case}\label{sec:local-alg}

Throughout this section, $R$ will be a local ring over a field of positive characteristic $k \subset \overline{\FF_p}$ with maximal ideal $\m$ and $M$ a finitely generated $R$-module. Recall that if $\psi\in\End_R(M)$ is an idempotent, then $M$ decomposes as $\im \psi \oplus \coker \psi$. If $\psi$ is not zero or an isomorphism, then both factors are nonzero and $M$ is decomposable.

We begin with the observation that $\psi$ also acts on the $k$-vector space $M/\m M$. The following lemma allows us to check only for idempotents modulo the maximal ideal.

\begin{lem}\label{lem:idemp}
  Let $A\colon M/\m M\to M/\m M$ be the induced action of $\psi\in\End_R(M)$ on $M/\m M$. If $A$ is an idempotent, then $M$ admits a direct sum decomposition $\ker\psi \oplus \im\psi$.
\end{lem}
\begin{proof}
  Let $N \coloneqq \im\psi$. We want to show that $0 \to N \inc M$ splits. Consider the composition
  \begin{equation}\label{eq:N-M-N}
    N \inc M \xra{\psi} N.
  \end{equation}
  We claim that this composition is surjective. Since a surjective endomorphism of finitely generated modules is invertible \cite[Corollary~4.4]{Eisenbud95}, we conclude that this composition is an isomorphism $\alpha$ on $N$.
  Therefore the inclusion $ 0 \to N \inc M $ is split by the composition
  \[ M \xra{\psi} N \xra{\alpha\inv} N, \]
  and thus $M$ decomposes as claimed.

  To check the surjectivity of \eqref{eq:N-M-N}, we may complete at the maximal ideal: by Nakayama's lemma, it suffices to check surjectivity of the map modulo $\mathfrak m$, and completing and reducing modulo $\mathfrak m$ is the same as just reducing modulo $\mathfrak m$. We may thus assume $R$, $M$, and $N$ are complete. Observe that by assumption we can write $\psi^2 = \psi + \psi'$ with $\psi'(M)\subset \m M$.
  Note that if $x \in \m^\ell M$, then $\psi'(x) = \psi^2(x) - \psi(x)$ lies in $\m^{\ell+1}M$.

  Let $n_0\in N$. By assumption, $n_0 = \psi(m_1)$ for some $m_1\in M$. Applying $\psi$ again, we get
  \[ \psi(n_0) = \psi^2(m_1) = \psi(m_1) + \psi'(m_1) = n_0 + n_1, \]
  where $n_1 = \psi^2(m_1) - \psi(m_1) = \psi(n_0) - n_0 \in \m M$. In fact, since $\psi(n_0)$ and $n_0$ are both in $N$, we have $n_1\in N$ as well, so $n_1\in \m M\cap N$. Thus, we can write $n_1 = \psi(m_2)$ for $m_2\in M$.
  Now, apply $\psi$ to both sides: by the assumption that $\psi$ is idempotent modulo $\m$, we have
  \[ \psi(n_1)=\psi^2(m_2) = \psi(m_2) + \psi'(m_2) = n_1 + n_2, \]
  where, similarly, $n_2 = \psi(n_1) - n_1$, so $n_2 \in \m^2M\cap N$ as well.
  Combining, we can write
  \[ n_0 = \psi(n_0) - n_1 = \psi(n_0) - \psi(n_1) + n_2 = \psi(n_0 - n_1) + n_2, \]
  with $n_1\in \m M\cap N$ and $n_2\in \m^2M\cap N$. Continuing in this fashion, for any $\ell$ we can write
  \[ n_0=\psi(n_0-n_1+\dots \pm n_{\ell}) \mp n_{\ell+1}, \]
  with $n_i \in \m^i M\cap N$.

  By the Artin--Rees lemma \cite[Lemma~5.1]{Eisenbud95}, there is $\ell>0$ such that for $i\gg0$ we get
  \[ \m^i M\cap N = \m^{i-\ell} ( \m^i M\cap N)\subset \m^{i-\ell} N. \]
  That is, the terms of $n_0-n_1+\cdots$ go to 0 in the $\m$-adic topology on $N$. Thus we can write
  \[ n_0 = \psi(n_0-n_1+n_2-\cdots), \]
  with $n_0-n_1+n_2-\cdots\in N$. We conclude that $\psi$ is surjective as a map $N\to N$.
\end{proof}

Thus, if we produce an element $\psi \in \End_R(M)$ that is an idempotent modulo $\m$, we obtain a splitting of $M$.
The following lemma allows us to produce idempotents modulo $\m$.
\vadjust{\goodbreak}

\begin{lem}\label{lem:jordan}
  Let $k\subset \bar \FF_{p}$ be a subfield and $A$ an endomorphism of a $k$-vector space.
Choose:
\begin{itemize}[topsep=6pt,itemsep=0pt]
\item $e_0$ such that $p^{e_0}$ is larger than the geometric multiplicity of any eigenvalue of $A$ in $\bar k$.
\item $e$ such that all eigenvalues of $A$ in $\bar k$ lie in $\FF_{p^e}$.
\end{itemize}
If $\lambda$ is an eigenvalue of $A$ contained in $k$, then $(A-\lambda)^{p^{e_0} (p^{e}-1)}$ is idempotent.
  If $\lambda$ is not the only eigenvalue of $A$ over $\bar k$, this is a nontrivial idempotent.
\end{lem}
\begin{proof}
To check that this power is idempotent, we may extend to the algebraic closure $\bar k$, because if $A^2=A$ as an endomorphism of $\bar k$-vector spaces the same is true over $k$.
  Since all eigenvalues of $A$ are contained in $\bar k$, we can without loss of generality put $A$ in Jordan canonical form, with each Jordan block being an $r_i\times r_i$ matrix of the form
  \[ \begin{pmatrix}
        \lambda_i & 1 & 0 & \dots & 0 \\
        0 & \lambda_i & 1  & \dots & 0 \\
        \vdots & \vdots & \vdots & \ddots & \vdots \\
        0 & 0 & 0 & \dots & \lambda_i
    \end{pmatrix}, \]
  where each $\lambda_i$ is an eigenvalue of $A$ and $r_i$ is bounded above by the geometric multiplicity of $\lambda_i$ in $A$.
  In this basis, $A-\lambda$ will be block-diagonal with blocks
  \[ \begin{pmatrix}
    \lambda_i-\lambda & 1 & 0 & \dots & 0 \\
    0 & \lambda_i-\lambda & 1 & \dots & 0 \\
    \vdots & \vdots & \vdots & \ddots & \vdots \\
    0 & 0 & 0 & \dots & \lambda_i-\lambda
  \end{pmatrix}. \]
  Set $\nu_i=\lambda_i-\lambda$. Then for any $n\geq 1$,
  the $n$-th power $(A-\lambda)^n$ is block-diagonal with blocks
  \[ \begin{pmatrix}
    \nu_i^n & \binom{n}1\nu_i^{n-1} & \binom{n}2\nu_i^{n-2} & \dots & \binom{n}{r_i}\nu_i^{n-r_i} \\
    0 & \nu_i^n & \binom{n}1\nu_i^{n-1} & \dots & \binom{n}{r_i-1}\nu_i^{n-r_i+1} \\
    \vdots & \vdots & \vdots & \ddots & \vdots \\
    0 & 0 & 0 & \dots & \nu_i^n
  \end{pmatrix}. \]
  If we choose $n > r_i$ and equal to a power of $p$, for example, $p^{e_0}$, then all non-diagonal terms will vanish by Lukas' theorem \cite[Lemma~15.22]{Eisenbud95}, so all blocks will have the form
  \[ \begin{pmatrix}
    \nu_i^n & 0 & 0 & \dots & 0 \\
    0 & \nu_i^n & 0 & \dots & 0 \\
    \vdots & \vdots & \vdots & \ddots & \vdots \\
    0 & 0 & 0 & \dots & \nu_i^n
  \end{pmatrix}. \]
  Finally, since $\nu_i =\lambda-\lambda_i \in \FF_{p^e}$ for each $i$, if we choose $n$ to be divisible also by $p^e-1$, we have for each nonzero $\nu_i$ that $$\nu_i^{n} = (\nu_i^{p^{e}-1})^{n/(p^{e}-1)} =1.$$
(We use here that $x^{p^e-1}=1$ for all $x\in \FF_{p^e}^\times$, since $\F_{p^e}^\times$ is a group of order $p^e-1$.)
  In particular, taking $n=p^{e_0}(p^e-1)$, the matrix $(A-\lambda)^{p^{e_0}(p^e-1)}$ is a diagonal matrix with diagonal entries 1 or 0, hence idempotent. Moreover, if some $\lambda_i\neq \lambda$ then $(A-\lambda)^n$ is not the zero matrix.
\looseness-1
\end{proof}

\begin{prop}\label{prop:split-idems}
  Let $\lambda_1,\dots,\lambda_r$ be the eigenvalues of $\phi\otimes R/\m$ defined over $k$. Pick $e$ such that all eigenvalues of $\phi \otimes R/\m$ in $\bar k$ are contained in $\FF_{p^e}$ and $e_0 =  \lceil \log_p(\mu(M) + 1)\rceil$ and set $\psi_j = (\phi - \lambda_j \id_M)^{p^{e_0} (p^{e} - 1)}$
and $\psi = \psi_1\circ\cdots\circ\psi_r$. Then $M$ has a direct sum decomposition
  \( M = \ker\psi_1 \oplus \cdots \oplus \ker\psi_r \oplus \im\psi. \)
\end{prop}

We note that $e$ can be calculated effectively by computing the splitting field of $\chi_{\phi\otimes R/\m}(\lambda)$.

\begin{proof}
  This is analogous to the proof of \Cref{prop:split-surj}. By \Cref{lem:jordan} and \Cref{lem:idemp},
since $p^{e_0}> \mu(M)$ is an upper bound on the geometric multiplicity of any eigenvalue in $\bar k$,
 each $\psi_i$ is idempotent and in particular a split surjection. Further, $\ker\psi_i\cap\ker\psi_j = 0$ by the same argument as in the proof of \Cref{prop:split-surj}. Therefore, using \Cref{lem:decomposition}, we have the desired direct sum decomposition.
\end{proof}

\pagebreak

This leads to a probabilistic algorithm to find the indecomposable summands of a finitely generated $R$-module $M$ in our setting, as follows:

\begin{algorithm}[H]
  \caption{(Indecomposable summands of a module over a commutative local ring)}\label{alg:local}
  \begin{algorithmic}[1]
    \smallskip
    \Require module $M$ over a commutative local ring $R$ such that $R/\m \subset\bar\FF_p$
    \Ensure  list $\textproc{Summands}(M)$ of indecomposable summands of $M$.
    \State Initialize a list $L$.
    \State Take a general element $\phi$ in $\End_R(M) \setminus \m \End_R(M)$.
    \State Let $A$ be the induced endomorphism of the $k$-vector space $M/\m M$.
    \State Let $k'$ be the splitting field of the minimal polynomial of $A$.
    \State Let $e_0 = \lceil\log_p(\mu(M) + 1)\rceil$, where $\mu(M) \coloneqq \dim_{R/\m}(M/\m M)$.
    \State Let $e$ be the least integer such that $\chi_{\phi\otimes R/\m}(\lambda)$ factors into linear terms in $\FF_{p^e}$.
    \ForAll { $k'$-eigenvalues $\lambda_i$ of $A$ }
      \State Let $\psi_i = (\phi - \lambda_i \id_M)^{p^{e_0} (p^{e} - 1)}$.
      \State By \Cref{prop:split-idems}, $\psi_i$ is idempotent, hence producing a splitting of $M$.
      \State Append \Call{Summands}{$\ker\psi_i$} to $L$.
    \EndFor
    \State Let $\psi$ be the composition of all $\psi_i$ above.
    \If {$\psi$ is nonzero}
      \State Append \Call{Summands}{$\im\psi$} to $L$.
    \EndIf
    \State \Return $L$.
  \end{algorithmic}
\end{algorithm}


\begin{rem}\label{rem:local}
  Similarly as in \Cref{rem:graded}, over a large enough field \Cref{alg:local} will produce indecomposable summands in one non-recursive iteration, and otherwise it will recursively split the summands until it finds an indecomposable decomposition of $M$.
\end{rem}

\begin{rem}\label{rem:extension}
  Note that \Cref{alg:graded,alg:local} are quite sensitive to the ground field $k$, as they need $k$-eigenvalues of a general endomorphism, and may produce splittings over a finite extension of $k$, even if the module splits already over $k$. 
We will calculate the probability of picking an endomorphism with distinct eigenvalues in \Cref{lem:general}, and show that if $|k|$ is sufficiently large this probability is close to 1.
  For an example where extending the base field is necessary, see \Cref{ex:elliptic}.
\end{rem}

\begin{rem}\label{rem:char}
  At the moment, there is not a splitting algorithm for the case when $R$ is a local ring over a field $k\subset\CC$ of characteristic 0. However, we note that \Cref{alg:local} can be used to test indecomposability of $M$ via reduction modulo~$p$, as follows.
  One can choose a finitely generated $\ZZ$-algebra $T$ and an $T$-algebra $R_T$ such that $R_T\otimes_T k = R$, and likewise an $R_T$-module $M_T$ such that $M_T\otimes _T k = M$ and $M_T$ is flat over $T$. If $\n$ is a maximal ideal of $T$, one can check that $T/\n \cong \FF_{p^e}$ for some prime $p$ and~$e$.
  The key point is that if $M$ is decomposable, then we can enlarge $T$ (if necessary) such that $M_T$ is decomposable. Thus, the various reductions  $M\otimes T/\n$ will also be decomposable for all $\n$.
  Picking a maximal ideal $\n$ of $T$, if our algorithm does not detect an indecomposable summand of $M\otimes T/\n$, then the original module $M$ must have been indecomposable.
  It would be interesting to investigate whether the Hasse principle holds in this case; namely, whether the decompositions of various $M_T\otimes_T T/\n$ can be patched into a decomposition of $M$.
\looseness-1

  We also point out that it is possible to ``guess'' an idempotent for $M$, even when there is no algorithm to produce one.  Since idempotents of $M$ are never in $\m\End_R (M)$, Macaulay2 often chooses them as some of the minimal generators of $\End_R(M)$. We can often produce nontrivial direct sum decompositions in the local case in characteristic 0,
by checking if the minimal generators of $\End_R(M)$ are idempotent.
\end{rem}

\subsection{Termination of the algorithms}

Throughout this section, let $M$ be a finitely generated $R$-module over a graded ring $R$ with $R_0 = k$ (in this case $M$ should also be graded) or over a local ring $(R,\m)$ with $k = R/\m\subset \bar \FF_p$. As a consequence of \Cref{alg:graded,alg:local}, we have:

\begin{lem}\label{lem:distinct}
  If there is $\phi\in\End_R(M)$ with two distinct eigenvalues, then $M$ is decomposable.
\end{lem}

Conversely, the following lemma implies that if $M$ is decomposable, then \Cref{alg:graded,alg:local} will find a decomposition of $M$ as a direct sum of indecomposable modules.

\begin{lem}\label{lem:general}
  Suppose $M$ is a finitely generated $R$-module which is decomposable.
  \begin{itemize}[topsep=6pt,itemsep=0pt]
  \item If $R$ is graded, let $\phi$ be a general $ k$-linear combination of a $k$-basis for $[\End_R(M)]_0$.
  \item If $R$ is local, let $\phi$ be a general $ k$-linear combination of minimal generators of $\End_R(M)$.
  \end{itemize}
  Let $L$ be the distinct $\bar k$-eigenvalues of $\phi$ in the graded case, or $\phi\otimes R/\m$ in the local case.
  If $k$ is infinite, then $|L| \geq 2$. If $k$ is finite, then the probability that $|L| \geq 2$ is at least
  $1 - \mu(M) \cdot \frac{|k|^{r-1}-1}{|k|^{r}-1}$, which approaches $1$ as $|k| \to \infty$.
\end{lem}


We will refer to such endomorphisms of $M$ as \emph{general}. In particular, if $M$ is decomposable, then \Cref{alg:graded,alg:local} produce a nontrivial direct sum decomposition of $M$, because a general element of $\End_R(M)$ will have distinct eigenvalues and thus the algorithms will produce a nontrivial decomposition. We emphasize that even if $M$ splits over $k$, a general element of $\End_R(M)$ may have eigenvalues over an extension of $k$ (see \Cref{rem:extension}).

\begin{nota}\label{notation:gensEnd0}
  Let $\phi_1,\dots,\phi_r$ form a $k$-basis for $[\End_R(M)]_0$ in the graded case, or minimal generators of $\End_R(M)$ in the local case, and let $A_1,\dots,A_r$ be their images modulo $\m$, meaning $k$-linear transformations $M/\m M\to M/\m M$ viewed as matrices with entries in $k$.
\end{nota}

\begin{proof}[Proof of \Cref{lem:general}]
  Let $U\subset\AA_k^r$ be the subset of $r$-tuples $(\alpha_1,\dots,\alpha_r)$ such that $\sum_i\alpha_i A_i$ has at least two distinct eigenvalues, i.e., such that $\sum_i\alpha_i\phi_i$ reduces to an endomorphism of $M/\m M$ with at least two distinct eigenvalues. 
  \vadjust{\goodbreak}

  It suffices to show that $U$ is a nonempty open subset of $\AA_k^r$. First, we show $U$ is nonempty:
  say $M = M_1\oplus M_2$ is a nontrivial decomposition and note that we may take $\phi_1,\phi_2$ to be projectors onto each summand. Then for any $\alpha_1,\alpha_2\in k$ the combination $\alpha_1 A_1 + \alpha_2 A_2$ has eigenvalues $\alpha_1,\alpha_2$; thus in particular there is \emph{an} element of $\End_R(M)$ reducing to an endomorphism of $M/\m M$ with distinct eigenvalues, so $U$ is nonempty.

  Now, we show that $U$ is open. This is a purely linear algebraic statement: Given any $r$ $m\times m$ matrices $A_1,\dots,A_r$, we claim that $A \coloneqq \sum_i\alpha_i A_i$ has at least two distinct eigenvalues for $\alpha_1,\dots,\alpha_r$ outside a Zariski-closed subset of $\AA_k^r$.
  The eigenvalues of $A$ are the roots of
  $$ \chi_{A}(\lambda) = \det(A-\lambda \id_{M/\m M}), $$
  which is a polynomial in $\lambda$ with coefficients in $\alpha_1,\dots,\alpha_r$. Then $A$ fails to have at least two distinct eigenvalues exactly when this polynomial factors as a pure power of a linear term.

  This condition is polynomial in the coefficients of powers of $\lambda$ in
  \begin{equation}\label{eq:roots}
    f \coloneqq \chi_{A}(\lambda) = \lambda^m + c_{m-1} \lambda^{m-1} + \dots + c_1 \lambda + c_0,
  \end{equation}
  with $c_j$ a polynomial of degree $m-j$ in the $\alpha_i$. In particular, $f$ has an $m$-fold root when
  $$ f = \frac{\partial f}{\partial\lambda} = \dots = \frac{\partial^{m-1}f}{\partial\lambda^{m-1}} = 0 $$
  vanish simultaneously. The resultant of these $m$ polynomials in the $m$ variables $c_i$ gives polynomial conditions in the $c_i$ for this to occur.
  In our setting, the $c_i$ are themselves polynomials in the $\alpha_i$, and thus we have obtained polynomial equations defining the locus where $A$ fails to have distinct roots, and thus the complement $U$ is open.

  Over an infinite field, then, a general choice of $k$-linear combination of the $\phi_i$ will lie in the nonempty open set $U$, i.e., will have at least two distinct eigenvalues.

  All that remains is to estimate the proportion of points of $\AA_k^r$ that lie in $U$ when $k$ is finite.
  That is, we want to compare the number of $\F_q$-points $(\alpha_i) \in U$ (i.e., the endomorphism with at least distinct eigenvalues) to the total number of points in $\AA_k^r$.
  We will give a very rough lower bound on the number of $\F_q$-points of $U$, by finding a single nonzero polynomial vanishing on $\AA_k^r\setminus U$.

  First, we find conditions on the coefficients $c_i$ of the polynomial $\chi_A(\lambda)$ for $f$ to be a power of a linear form. Say there is $y \in k$ such that
  $$ \chi_{A}(\lambda) = \lambda^m + c_{m-1} \lambda^{m-1} + \dots + c_1 \lambda + c_0 = (\lambda-y)^m. $$
  Equating coefficients, we have
  $$ c_{m-i} = (-1)^i \binom{m}{i} y^i. $$
  If $p = \operatorname{char} k\nmid m$, so $m^i\neq 0$ for any $i$, the coefficients $c_{m-i}$ then satisfy the $m$ (nontrivial) equations of the form
  \begin{equation}\label{eq:goodchar}
    \binom{m}{m-i} \cdot c_{m-1}^i = \pm m^i\cdot c_{m-i}.
  \end{equation}
  (When $\Char k = 0$, this recovers the ideal defining the rational normal curve of degree $m$; see for example \cite{Chi03,Chi04}.)
  If instead $m = p^d m_0$, then the only possible nonzero coefficients are $c_{m-ip^d}$ for $i=1, \dots, m_0$ (by Lukas' theorem \cite[Lemma~15.22]{Eisenbud95}), and these satisfy the $m_0$ (nontrivial) equations
  \begin{equation}\label{eq:badchar}
    \binom{m}{m-ip^d} \cdot c_{m-p^d}^i =\pm \binom{m}{m-p^d}^i c_{m-i p^d}
  \end{equation}
  and the $m-m_0$ equations $c_i = 0$ for $p^d \nmid i$.

  Note that the equations in the $c_i$ vanish on the set of matrices with exactly one eigenvalue; it is also straightforward
  to verify that if these equations are satisfied for some choice of $c_i$, then $\chi_A(\lambda)$ equals $(\lambda -y)^m$ for some $y \in k$.  Thus these equations, viewed as equations in the variables $c_i$, define the locus of characteristic polynomials that factor as a power of a linear form (at least up to radical, one can verify that this ideal is in fact radical).

  Since the $c_i$ are themselves polynomials in the $\alpha_i$, we have a set of equations in the $\alpha_i$ that define the locus of endomorphisms with only one eigenvalue.
  Note that since $c_{m-i}$ is a degree-$i$ polynomial in the $\alpha_i$, these equations are homogeneous of degree $\leq m$ in the $\alpha_i$.
  Moreover, since there is an endomorphism with distinct eigenvalues (because we assumed $M$ is decomposable), the resulting homogeneous equations in the $\alpha_i$ are not all zero.

  Denote by $Z$ the locus of points such that $f$ has an $m$-fold root, viewed as a subset of $\P_k^{r-1}$.
  (As the property of $\sum \alpha_i A_i$ having distinct roots is invariant under simultaneously scaling all $\alpha_i$ by a nonzero scalar, we can view the $\alpha_i$ as homogeneous coordinates on $\P_k^{r-1}$; the proportion of elements with distinct eigenvalues is the same whether we view our choice in $\AA_k^r$ or $\P_k^{r-1}$.)
  By the discussion of the preceding paragraph, there is a homogeneous equation in the $\alpha_i$ of degree $\leq m$ which vanishes on $Z$.

  $Z$ is contained in an $(r-2)$-dimensional hypersurface of degree $\leq m$.
  We can thus apply a quantitative form of the Lang--Weil estimate \cite[Proposition~12.1]{GL02} to conclude that
  $|Z(k)| \leq m \cdot  |\P_k^{r-2}(k)|=m \frac{|k|^{r-1}-1}{|k|-1}$.
  Thus, the proportion of endomorphisms with exactly one eigenvalue is bounded above by
  $$ m \cdot \frac{ |\P_k^{r-2}(k)|}{|\P_k^{r-1}(k)|} = m \cdot \frac{|k|^{r-1}-1}{|k|^{r}-1} $$
  Thus, the proportion of endomorphisms with at least two eigenvalues is at least
  \[ 1 - m \cdot \frac{|k|^{r-1}-1}{|k|^{r}-1} . \hfill \qedhere \]
\end{proof}

\begin{rem}
  As $|k|$ grows, the proportion of endomorphisms with distinct eigenvalues approaches 1. To guarantee that this proportion is at least $1-\alpha$, we need only
that $ \alpha-m\leq |k|^{r-1} \bigl(\alpha|k|-m\bigr). $
  Since $m \geq 1$ and thus the left side is negative, it suffices to take $|k|$ large enough that $\alpha|k|\geq m$.
\end{rem}

Note that by nature of probabilistic algorithms, if \Cref{alg:graded} or \ref{alg:local} fail to produce a nontrivial summand, it does not certify that $M$ is indecomposable. Despite this, a corollary of the proof of \Cref{lem:general} is the following criterion for indecomposability.

\begin{cor}\label{cor:generic}
  Let $M$ be a finitely generated $R$-module. Using \Cref{notation:gensEnd0}, if the coefficients of the characteristic polynomial of the generic endomorphism $A \coloneqq \sum_i \alpha_i A_i$ satisfy \eqref{eq:goodchar} when $p\nmid m$ or \eqref{eq:badchar} when $p\mid m$,
 then $M$ is indecomposable over~$\bar k$.
\end{cor}

While the criterion above is definitive, computing the characteristic polynomial of a $\mu(M)\times\mu(M)$ matrix with entries in $k[\alpha_1\,\dots,\alpha_r]$ is costly.
One can replace the characteristic polynomial with the minimal polynomial, which is easier to compute in practice, and use the same criteria as in \cref{cor:generic} on the coefficients of this smaller polynomial.

The following proposition shows a sufficient conditions which is much easier to check:

\begin{prop}\label{prop:indecomposability}
  Let $M$ be a finitely generated $R$-module. Suppose that either:
  \begin{enumerate}

  \item\label{item:indec-graded} $R$ is graded and $[\End_R(M)]_0$ is 1-dimensional and thus spanned by $\id_M$, or
  \item\label{item:indec-local} $R$ is local and the image of $\End_R(M)$ in $\End_k(M/\m M)$ is 1-dimensional;
  \end{enumerate}
  then $M$ is indecomposable.
\end{prop}
\begin{proof}
  To see \eqref{item:indec-graded}, observe that if $M$ decomposes non-trivially as $M_1\oplus M_2$, then the projections onto each factor are nontrivial degree-zero endomorphisms not equal to the identity, which do not have all entries contained in $\m$. The proof for \eqref{item:indec-local} is analogous.
\end{proof}




\begin{rem}\label{rem:complexity}
  We omit a computational complexity analysis here because without any assumptions about the module there are too many parameters that in any given application may become prominent. We note, however, two specific advantages:
  \begin{itemize}[topsep=6pt,itemsep=0pt]
  \item Using degree-limited syzygy algorithms, $[\End_R(M)]_0$ is easier to compute than $\End_R(M)$, particularly in cases where $M$ can be endowed with a multigrading.
  \item Over a large enough field \Cref{alg:graded,alg:local} are likely to terminate in one non-recursive iteration which requires only a single general endomorphism (see \Cref{rem:graded}).
  \end{itemize}
   While in many applications these advantages significantly improve the computational complexity, in general the worst-case complexity of the algorithms presented here is bounded by the complexity of computing the kernel of the map $f^t\otimes_R M$ where $f^t$ is the transpose of a free presentation $f$ of $M$. Compare with \cite{DX22}, which provides a complexity analysis for a decomposition algorithm with the additional assumption that the generators of $R$ as well as generators and relations of $M$ have distinct multidegrees. This assumption is exceedingly rare in applications from commutative algebra and algebraic geometry, but it is relevant, though not guaranteed, in the context of multiparameter persistence in topological data analysis.
\end{rem}

\section{Decomposing coherent sheaves}\label{sec:coherent}

While the preceding section was written in the language of modules, the standard algebro-geometric dictionary implies that \Cref{alg:graded} can be used to find indecomposable decompositions of coherent sheaves on projective varieties.
In this section, we make a few notes in the standard graded case regarding the relation between the eigenvalues discussed in the previous section (in this section, called \emph{module-theoretic eigenvalues}) with the notion of eigenvalue of an endomorphism of a vector bundle.

Let $X\subset \P^n$ be a projective variety, with ample line bundle $\OO_X(1)$.
Throughout, let $E$ be a vector bundle (i.e., a locally free coherent sheaf) on $X$, and consider an endomorphism $f\in\End_{\OO_X}(E)$. The following result is well-known.

\begin{lem}\label{lem:inj-iff-iso}
  An endomorphism $f\colon E\to E$ is injective if and only if it is an isomorphism.
\end{lem}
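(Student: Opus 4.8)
The plan is to prove the substantive implication, that an injective endomorphism $f\colon E\to E$ is an isomorphism; the reverse implication is trivial. Writing $Q:=\coker f$, the hypothesis gives a short exact sequence $0\to E\xra{f}E\to Q\to 0$, and it suffices to show $Q=0$, since a morphism of sheaves that is both injective and surjective is an isomorphism.

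To do this I would twist by a large power of the ample bundle $\O_X(1)$ and pass to global sections. Tensoring the sequence above with $\O_X(n)$ (which is exact, $\O_X(n)$ being invertible) and using the long exact cohomology sequence yields
\[ 0 \to H^0(E(n)) \xra{H^0(f(n))} H^0(E(n)) \to H^0(Q(n)) \to H^1(E(n)). \]
By Serre vanishing, $H^1(E(n)) = 0$ for $n \gg 0$, so for such $n$ the space $H^0(Q(n))$ is precisely the cokernel of $H^0(f(n))$. Now $H^0(E(n))$ is a finite-dimensional $k$-vector space because $X$ is projective, and $H^0(f(n))$ is injective by left-exactness of $H^0$; an injective endomorphism of a finite-dimensional vector space is automatically surjective, so $H^0(Q(n)) = 0$ for all $n \gg 0$.

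Finally I would invoke the standard fact that a nonzero coherent sheaf $Q$ on $X$ has $H^0(Q(n)) \neq 0$ for $n \gg 0$: for large $n$ the sheaf $Q(n)$ is globally generated because $\O_X(1)$ is ample, and $Q(n) \neq 0$ since tensoring by an invertible sheaf is faithful. Thus the vanishing of $H^0(Q(n))$ for all large $n$ forces $Q = 0$, i.e.\ $f$ is surjective, hence (being also injective) an isomorphism. The only places where real input is needed are the finite-dimensionality of $H^0(E(n))$ and Serre vanishing — exactly where projectivity and ampleness enter — so I do not expect a genuine obstacle. (An alternative argument bypassing sheaf cohomology: $\mathrm{Hom}(E,E)=H^0(X,\mathcal{H}om(E,E))$ is a finite-dimensional $k$-algebra, so $f$ satisfies a minimal polynomial $\mu(t)\in k[t]$; injectivity of $f$ forces $\mu(0)\neq 0$, and writing $\mu(t)=\mu(0)+t\,\rho(t)$ exhibits $-\mu(0)^{-1}\rho(f)$ as a two-sided inverse of $f$.)
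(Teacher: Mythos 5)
Your main argument is correct but follows a genuinely different route from the paper's. You reduce to showing $Q=\coker f$ vanishes, twist by $\O_X(n)$, kill $H^1(E(n))$ by Serre vanishing, note that $H^0(f(n))$ is an injective endomorphism of the finite-dimensional space $H^0(E(n))$ and hence surjective, and conclude $H^0(Q(n))=0$ for all $n\gg0$, which by global generation forces $Q=0$. This is a clean cohomological argument; projectivity enters through finite-dimensionality of $H^0$ and Serre vanishing. The paper instead exploits that $\End_{\O_X}(E)$ is a finite-dimensional $k$-algebra: $f$ satisfies a minimal monic polynomial $\mu$, injectivity forces $\mu(0)\neq 0$, and since $f\otimes k(x)$ satisfies the same $\mu$ on each fiber it is invertible there, so $f$ is surjective by Nakayama. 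Your parenthetical alternative — writing $\mu(t)=\mu(0)+t\,\rho(t)$ and producing the explicit inverse $-\mu(0)^{-1}\rho(f)$ — is essentially the paper's argument and is, if anything, slightly tidier since it bypasses the fiberwise step and hands you the inverse directly. Both proofs are valid; the cohomological one carries more machinery, while the minimal-polynomial one is more elementary and sits better with the paper's surrounding discussion of eigenvalues and determinants of bundle endomorphisms.
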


For completeness, we give the standard proof (see, e.g., \cite[Exercise~4.1]{Friedman98}).

\begin{proof}
  This requires only that $E$ is a coherent sheaf. We claim that $f\otimes k(x)$ is injective for any $x\in X$, which implies that $f\otimes k(x)$ is also surjective for every $x$ and thus $f$ is surjective.

  To see this, note that $f$ satisfies some minimal-degree monic polynomial, since $\End_{\OO_X}(E)$ is finite-dimensional over $k$. This monic polynomial has nonzero constant term since $f$ is injective. Now, $f\otimes k(x)$ satisfies this same polynomial, so must be injective.
\end{proof}

\begin{dfn}
  Let $r\coloneqq\rank E$, then for $f\in \End_{\OO_X}(E)$, taking top exterior powers yields
  \[\textstyle \bigwedge^r f\colon \bigwedge^r E \to \bigwedge^r E. \]
  Since $\End_{\OO_X}(\bigwedge^r E) = H^0(\OO_X)=k$, the map $\bigwedge^r f$ is multiplication by some $\lambda\in k$; we write $\det f$ for this scalar $\lambda$.
\end{dfn}

\begin{lem}
  Let $x \in X$ be any point (not necessarily closed), with residue field $k(x) = \OO_{X,x}/\m_x$. Then $\det(f\otimes k(x)) =\det f$.
\end{lem}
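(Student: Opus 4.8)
The plan is to reduce the claimed identity $\det(f\otimes k(x)) = \det f$ to the compatibility of the top exterior power with the base-change $-\otimes k(x)$. First I would observe that forming $\bigwedge^{\rank E}$ commutes with tensoring by any $\O_X$-algebra, and in particular with the residue map $\O_X \to k(x)$; thus the endomorphism $\bigl(\bigwedge^{\rank E} f\bigr)\otimes k(x)$ of $(\det E)\otimes k(x)$ agrees with $\bigwedge^{\rank E}(f\otimes k(x))$ as an endomorphism of the one-dimensional $k(x)$-vector space $\det\bigl(E\otimes k(x)\bigr) = (\det E)\otimes k(x)$. The left-hand side is multiplication by the image of $\det f\in k$ under $k\to k(x)$, while the right-hand side is by definition multiplication by the scalar $\det(f\otimes k(x))\in k(x)$. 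So the two scalars agree \emph{in $k(x)$}.

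The only subtlety is then that we want an equality in $k$, not merely in $k(x)$. But $\det f$ and $\det(f\otimes k(x))$ both already lie in the image of $k$: the first by the definition of $\det f$ (coming from $\End_{\O_X}(\det E)=H^0(\O_X)=k$), and the second because one can compute $\det(f\otimes k(x))$ as a polynomial expression in the entries of $f\otimes k(x)$ in a local trivialization, whose coefficients—being global, since $\det E$ is a line bundle—lie in $H^0(\O_X)=k$; alternatively, $\det f\in k$ maps to $\det(f\otimes k(x))$ under the injection $k\hookrightarrow k(x)$ by the previous paragraph, so they are equal as elements of $k$. Concretely: localize so that $E$ is free of rank $r=\rank E$ on an open $U\ni x$, represent $f|_U$ by an $r\times r$ matrix over $\O_X(U)$, and note that its determinant, a priori in $\O_X(U)$, is the restriction of the global section $\det f\in H^0(\O_X)=k$; reducing that matrix mod the maximal ideal of $x$ and taking its determinant gives $\det(f\otimes k(x))$, which is therefore the image of $\det f$.

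I do not expect a genuine obstacle here: the statement is essentially the naturality of $\bigwedge^{\rm top}$ under base change, packaged so as to record that the resulting scalar does not depend on the point $x$. The only point requiring a little care is the bookkeeping that $H^0(\O_X)=k$ (using that $X$ is projective and, implicitly, connected/reduced so that $H^0(\O_X)=k$) is what lets us identify the locally-computed determinant with a single element of $k$ independent of $x$, rather than a section that might vary; this is exactly the hypothesis already invoked in the preceding Definition, so nothing new is needed.
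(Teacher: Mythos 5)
Your proposal is correct and follows essentially the same route as the paper: the key point in both is that forming the top exterior power (equivalently, taking the determinant) commutes with restriction to the fiber at $x$, together with the observation that $\det f$ already lies in $k = H^0(\O_X)$ and is therefore unchanged by passing to $k(x)$. The paper states this in two terse sentences; you have merely expanded the same argument with the bookkeeping about which ring each scalar lives in.
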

\begin{proof}
  Note that $\rank( E\otimes k(x)) = \rank E = r$. Thus since $(\bigwedge^r f)\otimes k(x) = \bigwedge^r( f\otimes k(x))$ as $k(x)$-linear maps on $E\otimes k(x)$,
  we have $\det(f\otimes k(x)) =\det(f)\otimes k(x)$, but $\det(f)\in k$ and so is unaffected by reducing modulo $\m_x$.
\end{proof}

\begin{lem}
  $\det f \neq 0 $ if and only if $f$ is injective, in which case $f$ is an isomorphism.
\end{lem}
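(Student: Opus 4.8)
The plan is to reduce the statement to finite-dimensional linear algebra by restricting to the generic point $\eta\in X$, exploiting the fact that the preceding lemma was stated for arbitrary (not necessarily closed) points. Write $r=\rank E$ and let $K=K(X)=\O_{X,\eta}$ be the function field; then $E\otimes k(\eta)$ is an $r$-dimensional $K$-vector space, $f_\eta:=f\otimes k(\eta)$ is a $K$-linear endomorphism of it, and by the preceding lemma the ordinary (linear-algebra) determinant of $f_\eta$ equals the scalar $\det f\in k\subset K$.

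First I would treat the implication $\det f\neq 0\Rightarrow f$ injective. If $\det f\neq 0$ then $f_\eta$ has nonzero determinant, hence is injective. Now $\ker f$ is a coherent subsheaf of $E$, and since localization at $\eta$ is exact, $(\ker f)_\eta=\ker(f_\eta)=0$. Because $X$ is integral and $E$ is locally free, $E$ is torsion-free, and therefore so is its subsheaf $\ker f$; but a torsion-free coherent sheaf whose stalk at the generic point vanishes is zero. Hence $\ker f=0$, so $f$ is injective, and then $f$ is an isomorphism by the earlier lemma. (If one prefers not to assume $X$ irreducible, replace this paragraph by the Nakayama argument on $\coker f$: $\det f\neq 0$ forces $f\otimes k(x)$ bijective at every closed point $x$, so $\coker f$ has vanishing fibers and hence is zero; a surjective endomorphism of a coherent sheaf on a Noetherian scheme is an isomorphism.)

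For the converse, suppose $\det f=0$; then $\det f_\eta=0$, so the endomorphism $f_\eta$ of the finite-dimensional $K$-vector space $E\otimes k(\eta)$ has nontrivial kernel. If $f$ were injective, exactness of localization at $\eta$ would make $f_\eta$ injective, a contradiction; so $f$ is not injective, hence not an isomorphism. Combining the two implications gives $\det f\neq 0 \iff f$ injective $\iff f$ an isomorphism, the last equivalence being the earlier lemma. (One can also see $\det f\in k^\times$ directly for an isomorphism: functoriality of $\bigwedge^r$ gives $\det f\cdot\det(f^{-1})=\det(\id)=1$.)

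The only inputs beyond exactness of localization and linear algebra are that a locally free sheaf on the integral scheme $X$ is torsion-free — so a coherent subsheaf with trivial generic stalk vanishes — and the identification of $\det f$ with the naive determinant of $f_\eta$, which is precisely the preceding lemma applied at the non-closed point $\eta$; thus no genuinely new work is needed, and I expect the only point requiring care is making the generic-point (or Nakayama) reduction precise.
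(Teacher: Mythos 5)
Your proposal is correct and follows essentially the same route as the paper: both reduce to the generic point via the preceding lemma $\det(f\otimes k(x))=\det f$, use that a subsheaf of a locally free sheaf on a variety is torsion-free to deduce $\ker f=0$ from $(\ker f)_\eta=0$, and invoke the earlier injective-iff-isomorphism lemma. The only cosmetic difference is that the paper handles the direction $\ker f=0\Rightarrow\det f\neq 0$ by passing through ``isomorphism on fibers,'' while you argue at $\eta$ alone via exactness of localization; your parenthetical Nakayama and $\det f\cdot\det(f^{-1})=1$ remarks are harmless extras.
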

\begin{proof}
  Let $\ker f \neq 0$. Since localization is exact we have $\ker(f\otimes k(X)) = (\ker f)\otimes k(X)$.  Since $\ker f \subset E$ is torsion-free, the localization map $\ker f \to (\ker f)\otimes k(X)$ is injective and thus $(\ker f) \otimes k(X)\neq 0$. Since the $k(X)$-vector space map $f\otimes k(X)$ is not injective, $\det(f\otimes k(X))= 0$. In particular  $\det f = 0$ as well.

  Conversely, if $\ker f = 0$, then $f$ is an isomorphism, hence an isomorphism on fibers, and hence $\det(f\otimes k(x)) \neq 0$ for any $x$. Thus $\det f\neq 0$.
\end{proof}

\begin{dfn}
  $\lambda \in k$ is an \emph{eigenvalue} of $f \in \End_{\OO_X}(E)$ if $\det(f-\lambda \id_E)= 0$. In other words, the eigenvalues of $f$ are the zeroes of the univariate polynomial $\det(f - \lambda\id _E)$ with coefficients in $k$.
\end{dfn}

Note that $f$ is an isomorphism if and only if $\lambda=0$ is not an eigenvalue of $f$, just as for ordinary linear operators on vector spaces.

Now, let $R$ be the homogeneous coordinate ring of $X\subset \P^n$, and let $M$ be a graded $R$-module such that $\wtilde M=E$. For $\phi\in\End_R(M)$ we will refer to the eigenvalues of $\phi$ (see \Cref{def:eigenvalues}) as \emph{module-theoretic} eigenvalues of $\phi$.

\begin{lem}
  Let $f\colon E\to E$ arise from a map $\phi\colon M\to M$, in the sense that $\wtilde M = E$ and $\wtilde\phi = f$. The eigenvalues of $f$ are a subset of the module-theoretic eigenvalues of $\phi$.
\end{lem}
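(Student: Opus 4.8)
The plan is to compare the two characteristic polynomials directly. Let $\l \in k$ be an eigenvalue of $f$, so $\det(f - \l\,\id_E) = 0$. By the lemma characterizing the determinant scalar, $\det(f-\l\,\id_E) = 0$ is equivalent to $f - \l\,\id_E$ failing to be injective, equivalently failing to be an isomorphism. I want to conclude that $\l$ is an irrelevant eigenvalue of $g$, i.e.\ that $g \otimes R/m : M/mM \to M/mM$ has $\l$ as an eigenvalue, which amounts to showing $g - \l\,\id_M$ does not become invertible after tensoring with $R/m$.

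First I would set $h := g - \l\,\id_M \in \End_R(M)$, whose sheafification is $\wtilde h = f - \l\,\id_E$. The key step is: if $\wtilde h$ is not an isomorphism of sheaves, then $h$ is not surjective as a map of $R$-modules. Indeed, if $h$ were surjective, then since $M$ is finitely generated over the Noetherian ring $R$ and $h$ is a surjective endomorphism, $h$ would be an isomorphism (a surjective endomorphism of a Noetherian module is injective); sheafifying, $\wtilde h = f - \l\,\id_E$ would then be an isomorphism, contradiction. So $h$ is not surjective, hence $\coker h \neq 0$ is a finitely generated graded $R$-module, and by graded Nakayama $\coker h \otimes R/m \neq 0$. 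Since sheafification is right exact (or simply since $- \otimes R/m$ is right exact), $\coker(h \otimes R/m) = \coker h \otimes R/m \neq 0$, so $h \otimes R/m = g\otimes R/m - \l\,\id_{M/mM}$ is not surjective, hence not injective, on the finite-dimensional $k$-vector space $M/mM$. Therefore $\l$ is an eigenvalue of $g \otimes R/m$, i.e.\ an irrelevant eigenvalue of $g$.

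The main subtlety to be careful about is the direction of the implication in the sheaf-to-module passage: knowing $\wtilde h$ fails to be an isomorphism only tells us $h$ is not an isomorphism of modules, and a priori $h$ could fail injectivity rather than surjectivity. The trick above sidesteps this by using that for a finitely generated module over a Noetherian ring, surjectivity of an endomorphism already forces it to be an automorphism; so ``not an isomorphism of sheaves'' combined with ``surjective as modules'' is contradictory, which is exactly what is needed. One should also note the statement is genuinely one-directional — irrelevant eigenvalues of $g$ coming from the maximal ideal $m$ itself (e.g.\ torsion supported at the irrelevant locus) need not be eigenvalues of $f$ — so no reverse inclusion is claimed or needed.
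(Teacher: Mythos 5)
Your proposal is correct and uses essentially the same argument as the paper. The paper proves the contrapositive directly (if $\det(g\otimes R/m)\neq 0$ then $g\otimes R/m$ is surjective, hence $g$ is surjective by Nakayama, hence $f$ is surjective, hence an isomorphism), whereas you run the same chain as a proof by contradiction, invoking the module-level fact that a surjective endomorphism of a finitely generated module is an automorphism where the paper invokes the analogous fact for the sheaf $f$; these are cosmetic differences and both hinge on exactly the same use of Nakayama's lemma.
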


Note that any endomorphism of $E$ corresponds to an endomorphism of $\Gamma_*(E)$; we may want the freedom to work with other module representatives of $E$ though.

\begin{proof}
  All that needs to be shown is that if $\det(f-\lambda \id_E) =0$, then $\det(\phi-\lambda \id_M)=0$. Replacing $f-\lambda \id_E$ by $f$ and likewise $\phi-\lambda\id_M$ by $\phi$,
  we just need to show that $\det f = 0$ implies $\det\phi = 0$.

  Say $\det f = 0$. If $\det\phi = \det(\phi\otimes R/\m)\neq 0$, then $\phi$ induces a surjection $M/\m M\to M/\m M$, and thus $\phi$ induces a surjection $M\to M$ by Nakayama's lemma. The endomorphism $f$ is thus an isomorphism \cite[Corollary~4.4]{Eisenbud95}, and so $\det f\neq 0$.
\end{proof}

\begin{exa}
  By adding irrelevant summands to $M$ (i.e., summands which sheafify to zero), one can always add extraneous module-theoretic eigenvalues, so the containment of eigenvalues may be proper.
  Note that in general the multiplicities will never be equal, even if one takes $M=\Gamma_*(E)$: if $f$ is multiplication by $\lambda$, then $f$ will have $\rank E$ many eigenvalues with multiplicity, while $\phi$ will have $\mu(M)$ module-theoretic eigenvalues with multiplicity.
\end{exa}

Finally, we may combine the results of this section with \Cref{prop:split-surj} to prove the following proposition.

\begin{prop}
  Suppose $E$ is a vector bundle on a $k$-variety $X$, and $f\in\End_{\OO_X}(E)$ has distinct eigenvalues $\lambda_1,\dots,\lambda_n$. Set $g_i = (f - \lambda_i\id_E)^{\mu_i}$, where $\mu_i$ are module-theoretic multiplicities of the corresponding eigenvalues of $\Gamma_*(f)$, and $g = g_1\circ\dots\circ g_n$, then $E$ has a direct sum decomposition
  \[ E = \ker g_1 \oplus \dots \oplus \ker g_n \oplus \im g. \]
\end{prop}
\begin{proof}
  Take $M = \Gamma_*(E)$, $\psi_i = \Gamma_*(g_i)$, and $\psi = \Gamma_*(g)$; then since each $\lambda_i$ is also a module-theoretic eigenvalue of $M$, \Cref{prop:split-surj} produces a direct sum decomposition
  \[ M = \ker\psi_1 \oplus \cdots \oplus \ker\psi_n \oplus \im\psi; \]
note that sheafification is exact, so the sheaf associated to $\ker \psi_i$ is $\ker \wtilde \psi_i = \ker g_i$ (and this is nonzero), while the sheaf associated to $\im \psi $ is $\im g$.
Thus, taking the sheafification of the decomposition of $M$,
we have the desired direct sum decomposition of $E$.
\end{proof}

\begin{rem}
  We note that it is known already by \cite[Proposition~15]{Atiyah57} that an endomorphism of an indecomposable vector bundle $E$ cannot have two distinct eigenvalues. The utility of the preceding lemma is in producing an explicit direct sum decomposition of $E$.
\end{rem}

\section{Examples}\label{sec:examples}

In this section, we give examples of the kind of calculations and observations \Cref{alg:graded,alg:local} allow us to make.

\begin{exa}[Frobenius pushforward on the projective space $\P^n$]
  Let $S = k[x_0,\dots,x_n]$ be a polynomial ring with $\operatorname{char} k = p$ and $\deg x_i = 1$ and consider the Frobenius endomorphism
  \[ F\colon S\to S \quad \text{given by} \quad f \to f^p. \]
  Hartshorne \cite{Hartshorne1970} proved that for any line bundle $L\in\Pic\P^n$, the Frobenius pushforward $F_*L$ splits as a sum of line bundles. While the following calculations are straightforward to do by hand, they are immediately calculated via our algorithm:
  \begin{align*}
    \text{When }p=3, n=2: \\
    F_*\OO_{\P^2} &= \OO \oplus \OO(-1)^7 \oplus \OO(-2). \\
    \text{When } p=2, n=5: \\
    F_*\OO_{\P^5} &= \OO \oplus \OO(-1)^{15} \oplus \OO(-2)^{15} \oplus \OO(-3), \\
    F_*^2\OO_{\P^5} &= \OO \oplus \OO(-1)^{120} \oplus \OO(-2)^{546} \oplus \OO(-3)^{336} \oplus \OO(-4)^{21}.
  \end{align*}
\end{exa}

\begin{exa}[Frobenius pushforward on toric varieties]
  Let $X$ be a smooth toric variety and consider its Cox ring
  \[ S = \bigoplus_{[D]\in\Pic{X}} \; \Gamma(X, \OO(D)). \]
  Similar to the case of the projective space, B{\o}gvad and Thomsen \cite{Bogvad98,Thomsen00} showed that $F_*L$ totally splits as a direct sum of line bundles for any line bundle $L\in\Pic X$.

  As an example, consider the third Hirzebruch surface $X=\P(\OO_{\P^1}\oplus \OO_{\P^1}(3))$ over a field of characteristic 3. We have, for example, that
  \begin{align*}
    F_*\OO_X      &= \OO_X   \oplus \OO_X(-1,0)^2 \oplus \OO_X(0,-1)^2 \oplus \OO_X(1,-1)^3 \oplus \OO_X(2,-1), \\
    F_*\OO_X(1,1) &= \OO_X^3 \oplus \OO_X(-1,0)   \oplus \OO_X(1,-1)   \oplus \OO_X(1, 0)^2 \oplus \OO_X(2,-1)^2.
  \end{align*}
  In fact, Achinger \cite{Achinger15} showed that the total splitting of $F_*L$ for every line bundle $L$ characterizes smooth projective toric varieties.
\end{exa}

\begin{exa}[Frobenius pushforward on elliptic curves]\label{ex:elliptic}
  Consider the elliptic curve
  \[ X = \Proj \FF_7[x,y,z]/(x^3+y^3+z^3). \]
  This is an ordinary elliptic curve,
 hence $F$-split; thus $\OO_X$ is a summand of $F_* \OO_X$. Over the algebraic closure of $\FF_7$, $F_*\OO_X$ will decompose as $\bigoplus_{p=1}^7 \OO_X(p_i)$, where $p_1,\dots,p_7$ are the 7-torsion points of $X$. (For more on indecomposable summands of $F_*\OO_X$ on ordinary abelian varieties, see \cite{Tango,ST}.)

  However, over $\FF_7$, our algorithm calculates that $F_*\OO_X$ decomposes only as
  $$ F_* \OO_X =\OO_X \oplus M_1\oplus M_2\oplus M_3, $$
  with $M_i$ indecomposable (over $\FF_7$) of rank 2.

  After extending the ground field to $\FF_{49}$, our algorithm calculates the full decomposition
  $$ F_* \OO_X=\bigoplus_{p=1}^7 \OO_X(p_i). $$
  This reflects the fact that the 7-torsion points $p_i$ of $X$, and thus the sheaves $\OO_X(p_i)$, are not defined over $\FF_7$, but rather are defined over $\FF_{49}$.
\end{exa}

\begin{exa}[Frobenius pushforward on Grassmannians]
  Consider the Grassmannian $X = \Gr(2,4)$. We may work over the Cox ring $S$,
  which in this case coincides with the coordinate ring
  \[ S = \frac{k[p_{0,1},p_{0,2},p_{0,3},p_{1,2},p_{1,3},p_{2,3}]}{p_{1,2}p_{0,3}-p_{0,2}p_{1,3}+p_{0,1}p_{2,3}}. \]
  Then in characteristic $p=3$ we have:
  \[ F_*\OO_X = \OO \oplus \OO(-1)^{44} \oplus \OO(-2)^{20} \oplus A^4 \oplus B^4, \]
  where $A$ and $B$ are rank-2 indecomposable bundles (c.f.~\cite{RSVdB22}).
\end{exa}

\begin{exa}[Frobenius pushforward on Mori Dream Spaces]
  Continuing with the theme of computations over the Cox ring, the natural geometric setting is to consider the class of projective varieties known as Mori dream spaces \cite{HK00}.

  For instance, consider $X = \Bl_4\P^2$, the blowup of $\P^2$ at 4 general points. We work over the $\ZZ^5$-graded Cox ring
  \[ S = k[x_1,\dots,x_{10}]/\text{(five quadric Pl\"ucker relations)} \]
  with degrees
  \[
  \left(\!\begin{array}{rrrrrrrrrr}
  0&0&0&0&1&1&1&1&1&1 \\
  1&0&0&0&-1&-1&-1&0&0&0 \\
  0&1&0&0&-1&0&0&-1&-1&0 \\
  0&0&1&0&0&-1&0&-1&0&-1 \\
  0&0&0&1&0&0&-1&0&-1&-1
  \end{array}\!\right).
  \]
  Then in characteristic 2 we have:
  \begin{align*}
    F_*^2\OO_X = {\OO_{X}^{1}}
    &\oplus {\OO_{X}^{2}\ \left(-2,\,1,\,1,\,1,\,1\right)} \oplus {\OO_{X}^{2}\ \left(-1,\,0,\,0,\,0,\,1\right)} \\
    &\oplus {\OO_{X}^{2}\ \left(-1,\,0,\,0,\,1,\,0\right)} \oplus {\OO_{X}^{2}\ \left(-1,\,0,\,1,\,0,\,0\right)} \\
    &\oplus {\OO_{X}^{2}\ \left(-1,\,1,\,0,\,0,\,0\right)} \oplus B \oplus G,
  \end{align*}
  where $B, G$ are rank-3 and rank-2 indecomposable modules, as calculated in \cite{Hara15}.
\end{exa}

\begin{exa}[Frobenius pushforward on cubic surfaces]
  Let $X$ be a smooth cubic surface. Aside from a single exception in characteristic 0, $X$ will be globally $F$-split, so that any $F^e_*\OO_X $ admits $\OO_X$ as a direct summand.
  The other summands of Frobenius pushforwards of $\OO_X$ have yet to be studied, and in particular it is not known whether such rings should have the finite $F$-representation type property.

  The use of our algorithm to compute examples in small $p$ and $e$ suggest the behavior
  $$ F_* \OO_X = \OO_X\oplus M, $$
  with $M$ indecomposable, and furthermore $F_*^e M$ remains indecomposable for all $e\geq 0$. In other words, the indecomposable decomposition of $F^e_* \OO_X$ is
  $$ F_*^e \OO_X \cong \OO_X\oplus M\oplus F_* M\oplus\dots\oplus F_*^{e-1}M. $$
  In particular, this suggests $\OO_X$ will fail to have the finite $F$-representation type property.
  In fact, we believe a similar description holds true for quartic del Pezzos.
\end{exa}


\begin{exa}[Local singularities]\label{exa:local-singularities}
  Let $R = \FF_2[x,y,z]_{(x,y,z)}/(x^2y+xy^2+xyz+z^2)$. In the notation of \cite{Artin77}, this is the $D_4^1$ singularity, which is an exceptional version of the usual rational double point/Du Val singularities appearing in characteristic 2.  In particular, note that $R$ is not homogeneous. If $F_* R$ denotes the Frobenius pushforward of $R$, then the algorithm of \Cref{sec:local-alg} computes the following indecomposable summands:
  $$
  F_*R = R\oplus
  \coker
  \begin{pmatrix}
    x+y+z&z\\
    z&xy
  \end{pmatrix}\oplus\coker \begin{pmatrix}
    y&z\\
    z&x^{2}+xy+xz
  \end{pmatrix}\oplus \coker\begin{pmatrix}
  x&z\\
  z&xy+y^{2}+yz
  \end{pmatrix};
  $$
  there is one free summand and three reflexive modules of rank 1.
\end{exa}

\begin{exa}[Syzygies over Artinian rings]
  In recent work suggested by examples calculated using our algorithm, \cite{CDE24} studied the indecomposable summands of syzygy modules over a Golod ring $(R,\m,k)$ and found previously unexpected recurring behavior. Specifically, the syzygy modules of the residue field are direct sums of only three indecomposable modules: the residue field $k$, the maximal ideal $\m$, and an additional module $N = \Hom_R(\m, R)$.

  Here, we give a concrete example of one such ring. Let $k$ be any field and let $R = k[x,y]/(x^3,x^2y^3,y^5)$ and consider the (infinite) minimal free resolution of the residue field, which has rank $2^n$ in homological index $n$.   The fourth syzygy module of the residue field decomposes (ignoring the grading) as the direct sum
  $$ k^3 \oplus \m^2 \oplus N^3, $$
  and the fifth syzygy module as
  $$ k^8\oplus \m^9 \oplus N^2, $$
  where the module $N$ can be explicitly presented as
  \[ N = \coker
  \begin{pmatrix}
    x^{2}&0&0&0&y^{4}&xy^{3}&0&0\\
    -y&x&y^{3}&0&0&0&0&0\\
    0&0&0&y&-x&0&0&0\\
    0&0&0&0&0&-y&x&y^{2}
  \end{pmatrix} \]

  The use of our algorithm was essential to the observation that beyond the ``guaranteed'' summands of $k$ and $\m$ (which were known to appear by work of \cite{DE23}) only the one additional indecomposable module $N$ appears in the summands of syzygies of $k$.
\end{exa}


\begin{exa}[Symbolic diagonalization]\label{ex:diagonalization}
  An interesting application of our algorithm, suggested by Bernd Sturmfels,  is automated diagonalization of symbolically parameterized matrices. As a simple demonstration, let $R = K[a,b,c,d]$ and consider the following matrix
  \[ A = \begin{pmatrix}
    a&b&c&d\\
    d&a&b&c\\
    c&d&a&b\\
    b&c&d&a
  \end{pmatrix}. \]
  Then the splittings of $\coker A$ over $K = \QQ$ and $K = \QQ(i)$ have the following presentations, respectively:
  \[
  \thinmuskip1mu
  \medmuskip0mu \begin{pmatrix}
    a+b+c+d&0&0&0\\
    0&a-b+c-d&0&0\\
    0&0&a-c&b-d\\
    0&0&b-d&c-a
  \end{pmatrix},\,
  \begin{pmatrix}
    a+b+c+d&0&0&0\\
    0&a-b+c-d&0&0\\
    0&0&a+bi-c-di&0\\
    0&0&0&a-bi-c+di
  \end{pmatrix}.
  \]
  In particular, the change of basis matrices can be extracted from our algorithm.
\end{exa}



\bibliographystyle{alpha}
\bibliography{references.bib}

\end{document}